\definecolor{darkred}{rgb}{1,0,0} 
\definecolor{darkgreen}{rgb}{0,0.8,0}
\definecolor{darkblue}{rgb}{0,0,1}
 \def\bt{\begin{theorem}}
 	\def\el{\end{lemma}}
 \def\bl{\begin{lemma}}
 	\def\et{\end{theorem}}
 \def\bp{\begin{proposition}}
 	\def\ep{\end{proposition}}
 \def\bd{\begin{definition}}
 	\def\ed{\end{definition}}
 \def\br{\begin{remark}}
 	\def\er{\end{remark}}
\def\Label#1{\label{#1}}
\def\dib{\bar\partial}
 \def\R{{\mathbb R}}
 \def\C{\mathbb C}
\def\S{{\mathbb S}}
\def\S1{{\mathbb S^1}}
 \def\dib{\bar\partial}
 \def\label#1{\label{#1}{\bf(#1)}~}
 \numberwithin{equation}{section}
 \theoremstyle{plain}
 \newtheorem*{theorem*}{Theorem}
 \newtheorem{theorem}{Theorem}[section]
 \newtheorem{lemma}[theorem]{Lemma}
 \newtheorem{proposition}[theorem]{Proposition}
 \theoremstyle{definition}
 \newtheorem{definition}[theorem]{Definition}
 \theoremstyle{remark}
 \newtheorem{remark}[theorem]{Remark}
 \newcommand{\p}{\partial}
 \newcommand{\dbar}{\bar\partial}
 \newcommand{\dbarb}{\bar\partial_b}
 \theoremstyle{plain} 
 \dedicatory{To the memory of Nicholas Hanges}
\begin{document}
 	
 	\title{A note on the closed range of $\bar\partial_b$ on $q$-convex manifolds  }
\author{Luca Baracco}
\address{Dipartimento di Matematica Tullio Levi-Civita, Universit\`a di Padova, via Trieste 63, 35121 Padova, Italy}
\email{baracco@math.unipd.it}
\author{Alexander Tumanov}
\address{Department of Mathematics, University of Illinois at Urbana-Champaign, 1409 West Green
Street, Urbana, IL 61801, USA}
\email{tumanov@uiuc.edu}
 		
\thanks{Research of the second author is partially supported by Simons Foundation grant.}
 
\maketitle

 \begin{abstract} We prove that the tangential Cauchy-Riemann operator $\dbarb$ has closed range on Levi-pseudoconvex $CR$ manifolds that are embedded in a $q$-convex complex manifold $X$. Our result generalizes the known case when $X$ is a Stein manifold (in particular, when $X=\C^n$).
\end{abstract}

  	

 	\section{Introduction}
The $\bar\partial$ operator in complex analysis is very important because it is involved in the explanation of many phenomena concerning holomorphic functions and maps. The solvability of $\bar\partial$ on forms gives a characterization of domains of holomorphy and of Stein manifolds. Moreover, the regularity of solutions of the $\bar\partial$-equation gives a different perspective on the regularity up to the boundary of biholomorphisms. Similarly, the study of $\bar\partial_b$, which is the restriction to a hypersurface of $\bar\partial$, is tightly connected with geometric aspects of $CR$ manifolds, like embeddability in $\C^n$.

The natural environment to study these differential operators is the space of forms with $L^2$ coefficients. In these spaces the operators $\bar\partial$ and $\bar\partial_b$ are naturally defined in the distribution sense. If $\Omega$ is a domain in $\C^n$, then for any $d<n$ we have a closed densily defined unbounded operator
$\bar\partial_d : L^2_{(0,d)}(\Omega)\longrightarrow L^2_{(0,d+1)}(\Omega)$.
It is easy to check that $\bar\partial_{j+1}\circ\bar\partial_{j}=0$, and thus $(L^2_{0,j}(\Omega),\bar\partial_{j})_{j=0,...,n}$ is a complex. Similarly, for $\bar\partial_b$ the same hold. Forms in the kernel of $\bar\partial$ (resp. $\dbarb$) are called $\dbar$-closed (resp. $\dbarb$-closed), and those in the image $\dbar$-exact (resp. $\dbarb$-exact). In the $\dbarb$ problem, one is given a $(0,q)$-form $f$ that is $\dbarb$-closed and wants to find a $(0,q-1)$-form $u$ such that $\dbarb u=f$ (eventually with regularity requirements on the solution). The question has been studied and solved for $\dbar$ by Kohn and H\"ormander for pseudoconvex domains in $\C^n$. For $\dbarb$, the first result of this kind was proved for strictly pseudoconvex hypersurface-type $CR$ manifolds.

The starting point in tackling these problem is proving that the range of $\dbar$ ($\dbarb$) is closed, and this is done starting from the Kohn-Morrey-H\"ormander identity. The main difference between the two situations is that in dealing with $\dbarb$ one need to control a mixed term that appears when integrating by parts. This mixed term involves the derivative of the coefficients of the form in the so called ``totally real" direction. The presence of this term is what makes the closed range for $\dbarb$ harder to check. For strictly pseudoconvex manifolds, the fact that $\dbarb$ has closed range was been proved by Kohn (indeed he proved closed range not only in $L^2$, but also in Sobolev spaces). Shaw \cite{S85bis} and Boas-Shaw \cite{BS86} proved closed range for $\dbarb$ on boundaries of pseudoconvex domains in $\C^n$. Their method does not generalize to domains in manifolds. Kohn \cite{K86}, using microlocal analysis, proved closed range of $\dbarb$ on boundaries of pseudoconvex domains in Stein manifolds.
Similar, from the complex point of view, to the boundaries of domains are the $CR$ manifolds of hypersurface type (see Definition \ref{D3}).
Nicoara \cite{N06} proved closed range on pseudoconvex $CR$ manifolds of hypersurface type in $\C^n$ whose real dimension is larger than $3$. Finally, Baracco \cite{B12} proved the $3$ dimensional case using the technique of Kohn and a desingularization argument of complexification.

The pseudoconvex case in Stein manifolds is well understood.
Most of the techniques adopted rely on the existence of a plurisubharmonic weight. This is indeed a distinctive tract of Stein manifolds. Yet there are many other important manifolds that one encounter in complex analysis which do not have a plusubharmonic weight like for instance the compact manifolds. In the attempt to bridge the gap between these two extreme cases in \cite{AG62} the notion of completely $q$-convex manifolds is introduced. Roughly speaking a complex manifold $X$ of dimension $n$ is said to be completely $q$-convex  if $X$ is endowed with an exhaustion function which has a controlled number of positive Levi eigenvalues (namely greater than $q+1$ see Definitions \ref{D1} and \ref{D2} ).
Since their introduction these manifolds have been intensively studied. Most of the main tools in complex analysis can be considered in this new setting (see \cite{D,HL88,O18} and the references therein) and the main difficulty is of course the lack of convexity of the exhaustion funcion.
In this paper we want to extend further the results on the range of $\dbarb$ on pseudoconvex manifolds of hypersurface type when these are contained in a completely $q$-convex manifold. We will assume the existence of a $q+1$-convex weight defined only around $M$.  Here is the statement of our main result.
\bt \label{closedrange1} Let $X$ be a complex manifold and $M\subset\subset X$ a smooth compact, pseduconvex-oriented CR submanifold of hypersurface type of dimension $2p-1$. Assume that there exists a $(q+1)$-convex function $\phi$ defined in a neighborhood of $M$ in $X$. Then
\[\dib_b\colon  Dom(\dib_b)_{r,s-1}\longrightarrow L^2_{r,s}(M)\] has closed range if $n-q\le s \le p+q-n$ and $p>2(n-q)$. Moreover, if $X$ is completely $q$-convex, then the same conclusion holds for $p=2(n-q)$.
\et
The paper is organized as follows. In Section \ref{S2} recall some basic definitions and we show how to realize a $CR$ manifold  of hypersurface type as the boundary of a complex manifold $Y$. In Section \ref{S3} we present the proof of Theorem \ref{closedrange1}.

This paper was written for a special volume of CASJ dedicated to the memory of Nicholas Hanges.
He did pioniering work on propagation of holomorphic extendibility of CR functions, and we use related results in this paper. We will remember him as a prominent mathematician,
a wonderful person, and a great colleague.
\section{Definitions and construction of a partial complexification}\label{S2}
  Let $X$ be a complex manifold of dimension $n$ endowed with a Hermitian product, which we denote by $(\cdot,\cdot)_p :T^{(1,0)}_pX \times T^{(1,0)}_pX\rightarrow \C$.  We first extend this scalar product  to forms in the following way. Let $L_1,...,L_n$ be a local orthonormal basis of $(1,0)$-vector fields, and let $\omega_1,...,\omega_n$ be the dual basis of $(1,0)$-forms. We define the scalar product on $(1,0)$ forms by declaring that $\omega_1,...,\omega_n$ is an orthonormal basis and we will denote this product again by $(\cdot,\cdot)_p$. Such product do not depend on the choice of the basis $L$. For forms of higher bi-degree, say $(r,s)$ we proceed in the same way by declaring that $\omega_I\wedge \overline{\omega}_J$ is an orthonormal basis where $I$ and $J$ are multi-indexes of lenght $r$ and $s$ respectively.

  The volume form of $X$ is thus given by $dV=\frac1{i^n}\omega_1\wedge\bar\omega_1\wedge\dots\wedge\omega_n\wedge \bar\omega_n$.
If $\Omega\subset X$ is a relatively compact open subset with smooth boundary we define the space $L^2(\Omega)$ of square integrable functions on $\Omega$ as the set of all complex-valued measurable functions $f$ such that
$$\int_\Omega |f|^2 dV <\infty  .$$
$L^2(\Omega)$ is a Hilbert space with scalar product given by $$\langle f,g\rangle:= \int_\Omega f\,\bar g\, dV .$$
We extend this definition to forms and define for two integers $0\le r,s\le n$, the space $ L^2_{(r,s)}(\Omega)$ which is the space of forms $f$ such that $f$ can be written locally as $\underset{{|I|=r} {|J|=s}}\sum f_{IJ}\omega_I\wedge\bar\omega_J$ where $f_{IJ}$ are measurable functions and such that $\int_{\Omega} (f,f)_p dV(p) <\infty$.
The scalar product is defined similarly by $$\langle f,g\rangle =\int_\Omega (f, g)_p  dV(p).$$
Let $\mathcal{D}(\overline{\Omega})$ be the space of functions on $\overline{\Omega}$ which are smooth up to the boundary and let  $\mathcal{D}_{r,s}(\overline{\Omega})$ be the corresponding space of $(r,s)$-forms with coefficients in $\mathcal{D}(\overline{\Omega})$. On these spaces, the operator $\dib_{(r,s)}$ is defined in the usual way as
$$ \dib_{r,s}:\mathcal{D}_{r,s}(\Omega) \longrightarrow \mathcal{D}_{r,s+1}(\Omega) $$
In local coordinates we have
$$\dib f(z)= \sum_{IJj}\frac{\p}{\p_{\bar z_j}} f_{IJ} \,d\bar{z}_j\wedge dz_I\wedge d\bar z_J .$$
If instead of a local coordinate system we use a local system of orthonormal vector fields then we have
\begin{equation}\label{f1}
 \dib f(z)= \sum_{I,J,j} \overline{L}_j f_{IJ}\,\bar{\omega}_j\wedge \omega_I\wedge \bar{\omega}_J +\dots=A(f)+\dots
\end{equation}
where dots stand for terms which do not involve derivatives of $f$ and $A$ denote the operator formed with all the terms containing the derivatives of $f$.
Since $\mathcal{D}_{r,s}(\overline{\Omega})$ is dense in $L^2_{(r,s)}(\Omega)$, we consider the maximal closed extension of $\dib$ (still called $\dib$), and its $L^2$-adjoint $\dib^*$. Particularly useful is the formal adjoint operator $\vartheta$ which on smooth compactly supported forms is characterized by the property $\langle \vartheta f,g\rangle=\langle f,\dib g\rangle$. In a local frame we have
$$
 \vartheta f = \sum_{IK} \sum_j L_j f_{I,jK} \, \omega_I \wedge\overline{\omega}_K+ \dots
$$
where dots stand for terms that do not contain derivatives of $f$.
If $\phi:X \rightarrow \R$ is a continuous functions we define the weighted Hermitian product with weight $\phi$ in the following way
$$ \langle f,g\rangle_\phi := \int_\Omega e^{-\phi}(f,g)dV .$$
The corresponding norm will be denoted by $\|\cdot\|_\phi$ and the adjoint of $\dib$ in this product will be denoted by $\dib^*_\phi$.
The formal adjoint operator $\vartheta_\phi$ in a local frame is
\begin{equation}\label{f2}
 \vartheta_\phi f:= (-1)^{r-1} \sum_{IK}\sum_j \delta^\phi_j f_{I,jK} \,\omega_{I}\wedge \bar{\omega}_K +\dots=B(f)+\dots
\end{equation}
where $\delta^{\phi}_j u=L_j(u)-L_j(\phi)u$ and dots stand for terms without derivatives of $f$ and $B$ is the operator defined by the summation in the term in the middle of \ref{f2}.
In the sequel we shall use weighted scalar product with weights of the form $t\lambda$, where $\lambda$ is a convenient function and $t$ a real parameter. When this choice is made we shall indicate the corrisponding scalar product with $\langle \cdot,\cdot\rangle_t$ and similarly for $\|\cdot\|_t,\ \dbar^*_t, \vartheta_t,\ \delta^t_j$.
Let $\phi:X\rightarrow \R$ be a smooth function. We denote by $\partial\bar\partial \phi$ the $(1,1)$-form defined at every point $z\in X$ by
$$ \partial\bar\partial \phi(z)=\sum_{i,j=1}^n\partial_{z_i}\p_{\bar{z}_j}\phi(z)\,dz_i\wedge d\bar z_j, $$
where $z_1,\dots,z_n$ are local coordinates for $X$ at $z$.  If an orthonormal basis $\omega_i$ of $(1,0)$-forms has been chosen, then we shall also write
$$ \p\dib \phi=\sum_{ij}\phi_{ij}\,\omega_i\wedge \bar\omega_j .$$

The form $\p\dib \phi$ defines a Hermitian form, called the {\em Levi form}, on the holomorphic tangent bundle $T^{1,0}X$ of $X$. The Levi form will also be denoted by $\partial\bar\partial\phi$, and is defined in the following way. For $z\in X$ and vectors $X=\sum a_i\p_{z_i}$ and $Y=\sum_i b_i \p_{z_j}$ in $T_z^{1,0}X$, we have
$$ \p\bar\p \phi(z)(X,\overline{Y})=\sum_{i,j=1}^n a_i\bar{b}_j\,\partial_{z_i}\p_{\bar{z}_j}\phi(z) .$$
\bd \label{D1}We say that $\phi$ is {\em $q$-convex} if the Levi form $\partial\bar\partial \phi$ 
has at least $q$ positive eigenvalues.
\ed
\bd \label{D2}We say that $X$ is {\em  completely $q$-convex} if there exists a smooth exhaustion function $\phi:X\rightarrow \R$ which is $(q+1)$-convex.
\ed
Let $J:TX\rightarrow TX$ be the standard complex structure induced by the multiplication by $i$ and let $M$ be a real submanifold of $X$.
\bd\label{D3} The complex tangent space to $M$ at a point $z\in M$ is the subspace
$$T^\C_z M :=T_z M\cap JT_z M .$$
We will say that $M$ is a CR manifold if $T^\C_zM$ has constant dimension. The bundle so formed is called the complex tangent bundle of $M$ and is denoted by $T^\C M$. We say that $M$ is of hypersurface type if $\frac{TM}{T^\C M}$ has rank $1$.
\ed
Let $M$ be a smooth, compact CR submanifold of $X$ equipped with the induced CR structure $T^{1,0}M=\C TM\cap T^{1,0}X$. The De Rham exterior derivative induces a complex on skew-symmetric antiholomorphic forms on $M$. We denote such complex by $\dib_b$. Assume that $M$ is of hypersurface type. Hence the complexified tangent bundle $\C TM$ is spanned by $T^{1,0}M$, its conjugate $T^{0,1}M$ and a single additional vector field $T$. We can assume $T$ to be purely imaginary, that is, satisfying $\overline{T}=-T$.

 Let $\eta$ be a purely imaginary $1$-form which annihilates $T^{1,0}M\oplus T^{0,1}M$ and normalized so that $\langle \eta,T\rangle=-1$. The manifold $M$ is {\em orientable} if there exists a global 1-form section $\eta$ (or vector field $T$) and is {\em pseudoconvex} if the hermitian form defined on $T^{1,0}M$ by $d\eta (X,Y)=\langle d\eta, X\wedge \overline{Y}\rangle$ is positive semidefinite. We say that $M$ is {\em pseudoconvex-oriented} if both properties are satisfied at the same time.

A CR curve $\gamma$ on $M$ is a real curve such that $T\gamma\subset T^\C M$. A CR orbit is the union of all piecewise smooth CR curves issued from a point of $M$. We denote by  $\mathcal{O}(z)$ the $CR$ orbit of a point $z\in M$, and we say that a set $S$ is $CR$ {\em invariant} if $\mathcal{O}(z)\subset S$ for all $z\in S$. By Sussmann's Theorem \cite{MP06}, the orbit $\mathcal{O}(z)$ has the structure of an immersed variety of $X$.
Following  \cite{B12} and \cite{J95}, we prove that the manifold M in question consists of a single orbit. The difference with
\cite{B12,J95} is that instead of holomorphic coordinate functions that are not available here, we use the given (q+1)-convex function.
\bp
\Label{p2.1}
Let $X$ be a complex manifold and $M$ a smooth, compact, connected CR submanifold of hypersurface type. Let $\phi$ be a $(q+1)$-convex function defined on a neighborhood of $M$ in $X$. Assume that the dimension of $M$ is $2p-1$, with $p>n-q$. Then $M$ consists of a single CR orbit.
\ep
\begin{proof} Let $S\subset M$ be a closed, non empty $CR$ {\em invariant} subset of $M$. Since $M$ is compact we can assume that $S$ is the smallest of such sets i.e. that it doesn't contain any smaller closed non-empty $CR$ invariant subset. We will now prove that $S=M$. Assume by contradiction that $S\neq M$. For a point $x\in S$, we have only two possibilities: either $x$ is minimal in the sense of Tumanov (that is, the local $CR$ orbit of $x$ contains a neighborhood of $x$ in $M$) or there exists a complex manifold of dimension $p-1$ contained in $M$ that passes through $x$. No point $x\in S$ can be minimal in the sense of Tumanov, otherwise the set $S\setminus \mathcal{O}(x)$ would be proper, closed, $CR$ invariant, and strictly smaller than $S$. Hence $S$ is foliated by complex manifolds of dimension $p-1$. Since $S$ is compact, there exists a point $\bar x$ of $S$ where $\phi$ achieves its maximum value. In particular, $\phi$ has a maximum on the complex leaf passing through $\bar x$. This is impossible, because the Levi form of $\phi$ has at least one positive direction in the complex tangent space of $M$. We have reached a contradiction, thus proving that $S=M$. Let now $x\in M$ be the point where $\phi$ reaches its maximum. By the same reasoning as above, we can rule out the case in which there exists a complex manifold of dimension $p-1$ contained in $M$ that passes through $x$. Hence $x$ must be a minimal point in the sense of Tumanov. It follows that $\mathcal{O}(x)$ is open in $M$. Since $M$ is the smallest closed CR invariant subset element, we conclude that $\mathcal{O}(x)=M$.
\end{proof}
\begin{proposition}
\Label{t2.1}
Let $M\subset\subset X$ be a smooth, compact, connected, pseudoconvex-oriented CR manifold of hypersurface type of dimension $2p-1$. Let $\phi$ be a $(q+1)$-convex function defined on a neighborhood of $M$ in $X$, with $q>n-p$. Then $M$ is endowed with a partial one-sided complexification in $X$. That is, there exists a complex manifold $Y\subset\subset X$ which has $M$ as the smooth connected component of its boundary on the pseudoconvex side.
\end{proposition}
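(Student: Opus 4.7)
The strategy is to build $Y$ as the union of local one-sided complexifications obtained by attaching analytic discs to $M$ on the pseudoconvex side, following the approach of \cite{B12,J95}; the $(q+1)$-convex weight $\phi$ plays the role that holomorphic coordinate functions played there.

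To begin, I would apply Proposition~\ref{p2.1} to conclude that $M$ is a single CR orbit. The same argument also yields that the point $\bar x\in M$ where $\phi|_M$ attains its maximum is \emph{minimal in the sense of Tumanov}: if a complex $(p-1)$-dimensional leaf of $M$ passed through $\bar x$, then $\phi$ restricted to that leaf would have an interior maximum at $\bar x$, contradicting the fact that the Levi form of $\phi$ has at least one positive eigenvalue in the complex tangent space of $M$ (which is guaranteed because $q+1>n-p+1$).

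Next, I would invoke Tumanov's wedge extension theorem at $\bar x$: since $M$ is pseudoconvex-oriented and $\bar x$ is minimal, CR functions on $M$ extend holomorphically to a wedge with edge $M$ directed toward the pseudoconvex side; this wedge is realized concretely by a smooth family of small Bishop analytic discs with boundaries in $M$, whose interiors sweep out a real $2p$-dimensional complex submanifold $Y_{\bar x}\subset X$ attached to $M$ near $\bar x$. By the Hanges--Trèves propagation of holomorphic extendibility along CR curves (see e.g.\ \cite{MP06}), the existence of such a one-sided wedge extension at $\bar x$ propagates to every point of the CR orbit of $\bar x$, which by the previous step is all of $M$. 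Thus at every $z\in M$ one obtains a local one-sided complex submanifold $Y_z$ of $X$ of complex dimension $p$, transverse to $M$ and meeting $M$ precisely along its boundary on the pseudoconvex side.

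Finally, I would glue the patches $\{Y_z\}_{z\in M}$ into a single connected complex manifold $Y\subset\subset X$. The pseudoconvex orientation fixes the side globally, and uniqueness of holomorphic continuation forces overlapping $Y_z$'s to coincide as complex submanifolds; compactness of $M$ then gives $Y\subset\subset X$, with $M$ as a smooth connected component of $\partial Y$ on the pseudoconvex side. The principal obstacle is exactly this global gluing: the local wedges must combine into one honest complex manifold without fold singularities or branching. This is what forces us to use both the pseudoconvex-oriented hypothesis (to single out a consistent side along all of $M$) and the single-orbit conclusion of Proposition~\ref{p2.1}, where the hypotheses $q>n-p$ and the $(q+1)$-convexity of $\phi$ enter in an essential way.
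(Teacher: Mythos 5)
Your proposal is correct and follows essentially the same route as the paper: existence of a minimal point via the maximum of $\phi$ (Proposition~\ref{p2.1}), local one-sided extension at minimal points via Tr\'epreau--Tumanov, propagation of extendibility along CR curves through the single CR orbit (the paper's Lemma~\ref{l2.1}), and gluing by uniqueness of holomorphic continuation. One small caution: the wedge-extension theorems do not by themselves say the extension lies on the pseudoconvex side --- as the paper's Remark notes, it is the pseudoconvexity hypothesis that forces this --- but you do invoke the orientation for exactly this purpose, so the argument stands.
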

\begin{proof}
The proof follows closely \cite{B12}, since some geometric details will be needed in the next section we repeat the proof here for the reader's convenience.
The set of points of $M$ for which there exists a neighborhood where $M$ has a one-sided, positive, partial complexification is obviously open. We show that this set is also non-empty and closed. Let $z_0\in M$ be a point where there is no $(p-1) $ dimensional complex submanifold $S\subset M$ (see the proof of Proposition \ref{p2.1}). Consider a local coordinate patch $U\subset\C^n$ of $X$ at $z_0$ in which the projection $\pi_{z_0}:\C^n\to T_{z_0}M+iT_{z_0}M\cong \C^p$ induces a diffeomorphism between $M$ and $\pi_{z_0}(M)$. Since $\pi_{z_0}(M)$ is part of a mininimal  and pseudoconvex hypersurface, then $(\pi_{z_0}|_{M})^{-1}$ extends holomorphically to the pseudoconvex side $\pi_{z_0}(M)^+$ by \cite{Tr86} and \cite{T88}. Moreover, the map $(\pi_{z_0}|_{M})^{-1}$ parametrizes a one-sided complex manifold which has a neighborhood of $z_0$ in $M$ as its boundary.
By global pseudoconvexity and by uniqueness of holomorphic functions having the same trace on a real hypersurface, one-sided complex  neighborhoods glue together into a complex neighborhood of a maximal open subset $M_1\subset M$. This is indeed also closed. In fact, let $z_1\in \overline{M}_1$. Since $M$ consists of a single CR orbit by Proposition~\ref{p2.1}, then $z_1$ is connected to any other point of $M_1$ by a piecewise smooth CR curve $\gamma$. The statement now follows from the lemma below whose proof can be found in \cite{B12} and \cite{T94}.
\end{proof}
\bl
\Label{l2.1}
Let $M\subset\subset X$ be a smooth, pseudoconvex-oriented CR manifold of hypersurface type. Let $\gamma$ be a piecewise smooth CR curve connecting two points $z_0$ and $z_1$ of $M$. If $M$ has complex extension in direction $+JiT(z_0)$ at $z_0$, then $M$ also has complex extension in direction $+JiT(z_1)$ at $z_1$.
\el
\begin{remark} In the proof of Proposition \ref{p2.1} we have used \cite{Tr86} and \cite{T88} to build a one sided complexification of $M$ near minimal points. The results in \cite{Tr86} and \cite{T88}, however, do not specify on which side of $M$ this complexification lies. Our hypotheses on the pseudoconvexity of $M$ assures that the side of the complexification at minimal points is the pseudoconvex side of $M$, namely the side pointed by $JiT$.
\end{remark}

\section{Proof of the main result}\label{S3}
We follow the same proof as in \cite{B12}, and first prove a closed range theorem for $\dib$ on an annulus-like domain:

\begin{proposition} \label{closedrange} Let $M$ and $\phi$ be as in Proposition \ref{t2.1} and assume further that $p>2(n-q)$. Then there exists a complex sub-manifold $Y$ of $X$ of dimension $p$ with smooth boundary $\p Y$ such that $\p Y= M\cup M_2$, where $M_2$ is $CR$ of hypersurface type. Moreover, if $\dib$ is the Cauchy Riemann operator on $Y$, for a suitable weight function $\lambda$ we have,  for all $f\in Dom(\dib^*_t)_{r,s}\cap C^{\infty}_{(r,s)}(\overline{Y})$, that
\begin{equation}\label{anulusest}
t \| f\|^2_t \le C(\| \dib f\|_t^2 +\| \dib^*_t f\|_t^2) +C_t\|f\|^2_{-1} \quad \forall s\ge n-q, \  s\le p+q-n-1,\ \forall t>0 .
\end{equation}
\end{proposition}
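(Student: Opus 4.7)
The plan is to realize $M$ as one component of the boundary of a complex manifold $Y\subset X$ obtained by trimming the one-sided complexification $Y_0$ of Proposition~\ref{t2.1} by a level set of $\phi$. Pick a regular value $c$ of $\phi|_{Y_0}$ close to $\sup_{\overline{Y_0}}\phi$ and set $Y:=Y_0\cap\{\phi<c\}$ and $M_2:=Y_0\cap\{\phi=c\}$. By Sard, $M_2$ is smooth, and as a real hypersurface in the complex manifold $Y_0$ it is automatically CR of hypersurface type, so $\partial Y=M\cup M_2$ as required. Since $\phi$ is $(q+1)$-convex on $X$ and $Y$ is a $p$-dimensional complex submanifold, the Cauchy interlacing principle gives that $\phi|_Y$ has at least $p+q+1-n$ positive Levi eigenvalues on $Y$; a further restriction to $T^{1,0}M_2$ (complex dimension $p-1$) yields that $\phi_{jk}|_{T^{1,0}M_2}$ has at least $p+q-n$ positive eigenvalues.

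The weight I take is $\lambda=\chi\circ\phi$ on $Y$, with $\chi\colon\R\to\R$ convex and rapidly increasing, calibrated so that the positive eigenvalues of $\p\dbar\lambda$ dominate the remaining $n-q-1$ eigenvalues uniformly on $\overline{Y}$. The weighted Morrey--Kohn--H\"ormander identity on $Y$, applied to $f\in\Dom(\dib^*_t)\cap C^\infty_{(r,s)}(\overline{Y})$, produces a lower bound
\[
\|\dib f\|_t^2+\|\dib^*_t f\|_t^2 \;\gtrsim\; \sum_{I,J,k}\|\overline{L}_k f_{IJ}\|_t^2+t\!\int_Y\!\sum_{I,K,j,k}\lambda_{jk}f_{I,jK}\overline{f_{I,kK}}\,e^{-t\lambda}\,dV+B_M(f)+B_{M_2}(f)-C\|f\|_t^2,
\]
where $B_M(f)$ and $B_{M_2}(f)$ are the standard Levi-form boundary integrals over $M$ and $M_2$.

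The three positivity-producing terms are then controlled separately. In a diagonal frame for $\p\dbar\lambda$ with eigenvalues $\mu_1\le\dots\le\mu_p$ the interior Levi sum becomes $\sum_{|J|=s}(\mu_{j_1}+\dots+\mu_{j_s})|f_{I,J}|^2$; for $s\ge n-q$ every $s$-subset of eigenvalues must contain at least one of the (at least) $p+q+1-n$ positive ones, and a sufficiently convex choice of $\chi$ forces this interior contribution to be $\ge ct\|f\|_t^2$. The term $B_M(f)$ is non-negative by pseudoconvexity of $M$. For $B_{M_2}(f)$ the restriction $\phi_{jk}|_{T^{1,0}M_2}$ has $\ge p+q-n$ positive eigenvalues, and the bound $s\le p+q-n-1$ places the form $f$ in the degree range in which the partially positive signature of this Levi form yields a contribution bounded below up to an error of order $\int_{M_2}|f|^2 dS$, in turn absorbable into $C_t\|f\|^2_{-1}+\varepsilon\|f\|_t^2$ by a standard trace--interpolation argument. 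Combining these estimates, absorbing the $C\|f\|_t^2$ loss into the interior $t\|f\|_t^2$ term by choosing $t$ large, and treating the usual lower-order cross terms, produces~\eqref{anulusest}.

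The most delicate step is the analysis of $B_{M_2}(f)$: because $Y$ lies on the side $\{\phi<c\}$ of $M_2$, where the inward normal points against $\nabla\phi$, the Levi form of the defining function of $M_2$ viewed from $Y$ has an indefinite signature and the associated boundary term is of \emph{a priori} indeterminate sign. Extracting a useful lower bound from it is precisely what pins the admissible degree range to $s\le p+q-n-1$ and what forces the careful calibration of $\chi$, so that the interior positivity is large enough to absorb any unfavorable contribution coming from $M_2$.
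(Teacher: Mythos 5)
Your proposal contains genuine gaps at the three places where the real work of this proposition happens.

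First, the construction of $M_2$. Cutting $Y_0$ by a sublevel set $\{\phi<c\}$ of the given $(q+1)$-convex function does not produce an annulus with boundary $M\cup M_2$: the supremum of $\phi$ over $\overline{Y_0}$ may well be attained on $M$ itself, in which case $\{\phi=c\}$ meets $M$ and $Y_0\cap\{\phi<c\}$ loses part of the collar of $M$. The paper instead sets $\varphi=-\log\rho+C\phi$, where $\rho$ is a defining function of $M$ in $Y$; since $-\log\rho\to+\infty$ at $M$ (and is plurisubharmonic up to a bounded error by Oka's lemma), the level set $\varphi=K$ for $K$ large is a smooth hypersurface $M_2$ genuinely separated from $M$, and $\{\varphi>K\}$ is the desired annulus. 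Second, your mechanism for interior positivity fails: composing $\phi$ with a convex increasing $\chi$ gives $\p\dib(\chi\circ\phi)=\chi'(\phi)\,\p\dib\phi+\chi''(\phi)\,\p\phi\wedge\dib\phi$, which rescales all eigenvalues uniformly and adds only a rank-one positive term; it cannot make the sum of an arbitrary $(n-q)$-tuple of eigenvalues positive when $\phi$ is merely $(q+1)$-convex. The paper obtains this by modifying the Hermitian \emph{metric} (Lemma IX.3.1 of \cite{D}), which is the standard Andreotti--Grauert device and is what actually makes the term $t\sum\lambda_{jk}f_{I,jK}\bar f_{I,kK}$ dominate $t\|f\|_t^2$ for $s\ge n-q$.

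Third, and most seriously, the boundary term on $M_2$ cannot be ``absorbed into $C_t\|f\|_{-1}^2+\varepsilon\|f\|_t^2$ by a trace--interpolation argument'': a boundary integral $\int_{M_2}|f|^2\,dS$ is simply not controlled by interior $L^2$ or negative-order norms of $f$, so this step would fail. Handling the concave inner boundary is the heart of the proposition, and the paper follows Shaw \cite{S85}: it takes a weight that \emph{switches sign} ($\lambda=\phi$ near $M$ but $\lambda=-\varphi$ near $M_2$) and, near $M_2$, integrates by parts the tangential terms $\|\bar L_jf_{IJ}\|_t^2$, $j<p$, into $\|\delta^t_jf_{IJ}\|_t^2$. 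This converts the boundary integrand from $\sum_{j\in J}l^\lambda_j|f_{IJ}|^2$ into $-\sum_{j\notin J}l^\lambda_j|f_{IJ}|^2=\sum_{j\notin J}l^\varphi_j|f_{IJ}|^2$ as in \eqref{terza}--\eqref{Sub}, a sum over the \emph{complement} of $J$, which is pointwise nonnegative precisely when $p-1-s\ge n-q$, i.e.\ $s\le p+q-n-1$; the matching interior curvature terms are treated the same way. Without this integration by parts and sign flip of the weight, no degree restriction makes the $M_2$ contribution controllable, so the estimate \eqref{anulusest} is not established by your argument.
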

\begin{proof} First we equip the manifold $X$ with an Hermitian product in such a way that if $\phi_1(z)\le \dots \le\phi_n(z)$ are the ordered eigenvalues of $\p\dib\phi(z)$, then
$$\phi_1(z)+\dots +\phi_{n-q}(z)>c>0$$ for some constant $c$ and for every $z$ in a neighborhood of $M$. This is possible because $\phi_{n-q}>0$ by the $q+1$-convexity of $\phi$ (see \cite[Lemma IX.3.1]{D}). Let $Y$ be the complex manifold constructed in Proposition \ref{t2.1}.
Note that by the construcion made there we have that for any point $z_0 \in M$ there exists a local coordinate patch $U\subset\C^n$ of $X$ at $z_0$ in which the projection $\pi_{z_0}:\C^n\to T_{z_0}M+iT_{z_0}M\cong \C^p$ induces a diffeomorphism between $M\cap U$ and $\pi_{z_0}(M)$ and is a biholomorphism between $Y\cap U$ and $\pi_{z_0}(M)^+ \cap V$ where $V$ ia a convenient neighborhood of $\pi(z_0)$ in $\C^p$. We shall use $\pi_{z_0}$ as a local coordinate chart of $Y$.
Let $\rho: Y\rightarrow \R$ be a smooth, non-negative function such that $\rho =0$ on $M$, and $d \rho \neq 0$ on $M$. Since $M$ is pseudoconvex, then the negative eigenvalues of the Levi form of $-\log(\rho)$ are bounded from below. In fact, in the local chart as above around $z_0\in M$ we have that $\rho =hd_M$ where $h$ is a positive and non vanishing smooth function and $d_M$ is the Euclidean distance in $\C^p$ of $\pi_{z_0}(z)$ from $\pi_{z_0}(M)$. Then by Oka's lemma we have that $-\log(d_M)$ is plurisubharmonic then $-\log(\rho)=-\log(h)-\log(d_M)$.
The Levi form $\p\dib\phi$ restricted to $T^{1,0}Y$ has at least $p+q+1-n$ positive eigenvalues. Let $\varphi:= -\log(\rho) +C\phi$. If the constant $C$ is large enough, then $\partial\bar\partial\varphi$ restricted to $T^{1,0}Y$ has at least the same number of positive eigenvalues as $\p\dib\phi$. Since $d\rho \neq 0$ , then $d\varphi \neq 0$ in a neighborhood of $M$ in $Y$. In particular, the subset of $Y$ defined by the equation $\rho e^{-C\phi} =e^{-K}$ is a regular hypersurface if $K$ is large enough. We call it $M_2$. The Levi form of $M_2$ has at least $p+q-n$ positive eigenvalues. We consider now the annulus-like domain, which we call again $Y$, defined by $\varphi >K$. The boundary of $Y$ consists of the two connected components $M$ and $M_2$. We exploit \cite[Lemma IX.3.1]{D} once again to choose the Hermitian metric on $X$ in a neighborhood of $M_2$ so that the following is true: if $\varphi_1(z)\le\dots \le\varphi_{p-1}(z)$ are the ordered eigenvalues of the tangential Levi form $\p\dib\varphi$ (i.e. restricted to $T^{(1,0)}M_2$) at a point $z\in M_2$, then the sum of any $n-q$ of such eigenvalues is strictly positive.
We now follow \cite[page 260]{S85}. First we choose the weight function.
Let $\lambda\in C^2(\overline{Y})$ be a function such that $\lambda=\phi$ in a neighborhood of $M$ and $\lambda =-\varphi$ in a neighborhood of $M_2$.
It is enough to prove \refeq{anulusest} locally in a neighborhood of the boundary $\p Y$.
Let $z\in M$ and $U_z$ a small neighborhood of $z$ in $X$. Choose a local system of orthonormal holomorphic vector fields $L_1,\dots,L_p$ tangent to $Y$ such that $L_j(\rho)=-\delta_{jp}$, and let $\omega_1,...,\omega_p$ be the corresponding dual frame.  Following \cite{S85} and \cite{H} we have the following Kohn-H\"ormander-Morrey type formula:
\begin{equation}\label{seconda}
\begin{split}
\|\bar\p f\|_t^2+\|\bar\p^*_t f\|_t^2 &=\sum _{I,J} \sum_j  \| \bar L_jf_{IJ}\|_t^2 +t\,\underset{I,K}{\sum}^{\prime}\sum_{j,k} (\lambda_{jk} f_{I,jK},f_{I,kK})_t \\
&-\underset{IK}{\sum}^{\prime}\sum_{j,k<p} \int_{M\cap U_z} \langle \rho_{jk} f_{I,jK},f_{I,kK}\rangle_t dS +R(f)+E(f) .
\end{split}
\end{equation}
where $R(f)+E(f)$ are terms as in \cite[page 263]{S85} that arise when manipulating $\|\bar\p f\|_t^2+\|\bar\p^*_t f\|_t^2$ .
In fact when we replace the terms inside the norms with the terms defined in equations \ref{f1} and \ref{f2} we consider first the terms that contain squares of derivatives of $f$ and we group all the other terms in the term indicated by $R(f)$. So $R(f)$ contains only terms that can be estimated, uniformly in $t$, by $(\|A(f)\|_t+\|B(f)\|_t+\|f\|_t)\|f\|_t$.
The next step in proving \ref{seconda}  is to turn the $L$ derivatives of $f$ in $\|B(f)\|^2_t$ into $\overline{L}$ derivatives by integration by parts. In doing so some new terms arise. These terms that can be estimated uniformly in $t$ by $(\|\overline{L}(f)\|_t \|f\|_t$ where $\|\overline{L}(u)\|_t^2=\sum_j \|\overline{L}_j(u)\|_t^2 +\|u\|^2_t$ and we indicate these terms with $E(f)$.
By the pseudoconvexity of $M$, the last boundary integral in \eqref{seconda} is positive, and can therefore be dropped. Near $M$ we have that $\lambda=\phi$, and thus $\lambda_{jk}=\phi_{jk}$. Moreover, by the choice of the Hermitian product, the sum of any $n-q$ eigenvalues of the Levi form of $\phi$ is strictly positive. Hence the second term on the left side of \refeq{seconda} is greater than
$t c \|f\|^2_t$ if the antiholomorphic degree $s$ of $f$ is $s\ge n-q$. The terms $R(f)$ and $E(f)$ can be estimated using the first two term on the right hand side of \ref{seconda}.
Let now $z$ be a point of $M_2$. We start with the same formula \refeq{seconda} where $M$ is replaced by $M_2$ and $\rho$ is replaced by a defining equation of $M_2$ which is of the form $\rho_2=(\lambda +K)h$ where $h$ is a positive function such that $|d\rho_2|=1$ on $M_2$. After integrating by parts the terms of type $\| \bar L_j f_{IJ}\|^2_t$ for $j<p$ we obtain
\begin{equation}\label{terza}
\begin{split}
\|\dib f\|^2_t +\|\dib^*_t f \|^2_t&=\underset{I,J}{\sum}^\prime \| \bar L_p (f_{IJ})\|^2_t +\underset{I,J}{\sum}^\prime \sum_{j<p} \| \delta^t_j f_{I,J}\|^2_t  \\
&+t\, \underset{I,K}{\sum}^\prime \sum_{j,k} (\lambda_{jk} f_{I,jK},f_{I,kK})_t -t\,\underset{I,J}{\sum}^\prime \sum_{j<p} (\lambda_{jj}f_{I,J},f_{I,J})_t \\
&+\underset{I,K}{\sum}^\prime \sum_{j,k<p} \int_{M_2\cap U}h\lambda_{jk}f_{I,jK}\bar f_{I,kK} e^{-t\lambda}dS \\&-\underset{I,J}{\sum}^\prime \sum_{j<p} \int_{M_2\cap U}h\lambda_{jj}|f_{I,J}|^2 e^{-t\lambda} dS +E(f)+R(f).
\end{split}
\end{equation}
Here we have used the fact that the defining function of $M_2$ has essentially the same Levi form as the weight function $\lambda$.
We first examine the terms of  \refeq{terza} consisting of boundary integrals. It is not restrictive for our purposes to assume that pointwise the tangential Levi form is diagonal and let $l^\lambda_1,..., l^\lambda_{p-1}$ be its eigenvalues. Then
\begin{equation}\label{a} \underset{I,K}{\sum}^\prime \sum_{j,k<p} \lambda_{jk}f_{I,jK}\bar f_{I,kK}=\underset{I,J}{\sum}^\prime \sum_{j\in J}l^\lambda_j |f_{I,J}|^2.
\end{equation}
After subtracting from \eqref{a} the term coming from the second boundary integral, which is just the trace of the tangential Levi form, we obtain
\begin{equation}\label{Sub}
 -\underset{J}{\sum}^\prime \sum_{j\notin J, }l^\lambda_j |f_{I,J}|^2>\underset{J}{\sum}^\prime c |f_{I,J}|^2,
\end{equation}
where $c$ is strictly positive as soon as $|J|<p+q-n$. In a similar way we can handle the tangential terms (i.e. those for $j,k <p$ and $p\notin K,J$) in the second line of \eqref{terza}. The terms with either $p\in J,K$ or $j=p$ or $k=p$ can be handled as in \cite{S85}.
\end{proof}
Choosing $t$ large enough, we can pass from \eqref{terza} to a priori estimates of higher Sobolev order.
As a consequence using the elliptic regularization technique as was done in \cite{KN65} and \cite{K73}  we obtain that the space of harmonic forms on $Y$ which is the space $H^{r,s}_t(Y):=\ker(\bar\partial)\cap \ker(\bar\partial^*_t)$, is finite dimensional and  moreover we have a Hodge decomposition on the space of forms orthogonal to $H^{r,s}_t(Y)$,  existence and global Sobolev regularity of the $\dib$-Neumann operator $N_t$.

 We are now in position to prove Theorem \ref{closedrange1}.
\begin{proof}[Proof of Theorem \ref{closedrange1}]
By Proposition \ref{closedrange} there exists a complex submanifold $Y$ whose boundary contains $M$. We  follow \cite{K86} Paragraph 5  and we have that $\dib_b$ has closed range in degree $s$ if \ref{anulusest} holds in degree $s$ and  $p-s-1$. Therefore by Proposition \ref{closedrange} we have closed range of $\dib_b$ for $ n-q\le s\le p+q-n$ provided $p+q-n-1\ge n-q$, that is, $p> 2(n-q)$.

 If $X$ is $q$-complete, then by \cite{KS01} it is possible to extend $M$ to an analytic set $E$ whose boundary, in the sense of currents, is $M$. By the Hironaka desingularization theorem \cite{J08} there exists a manifold $\tilde{E}$ and a proper bimeromorphic map $\pi \colon \tilde{E}\rightarrow E$ such that $\pi$ is an isomorphism over the non singular part of $E$. Near $M$ we have that $E$ coincides with $Y$ at the regular points of $E$. Since $\pi$ is onto and since the regular part of $E$ is dense and connected, it follows that $\tilde{E}$ contains an isomorphic copy of $Y$ and $\pi$ is an actual diffeomorphism near the boundary.
Pulling back $\phi$ on $\tilde{E}$, we can repeat the proof of Theorem \ref{closedrange}, where the boundary of the manifold $Y$ is now just $M$. Following the same proof we conclude that the range of $\dib_b$ is closed for $p=2(n-q)$.
\end{proof}
\section{Acknowledgements}
The authors would like to thank Martino Fassina for useful comments on the manuscript and the anonymous referee whose advice improved greatly the expository quality of the paper.

 								\bibliographystyle{alpha}

 							\end{document}